\tikzstyle{uStyle}=[shape = circle, minimum size = 20pt, inner sep =2.5pt, outer sep = 0pt, draw, fill=white]
\newtheorem{theorem}{Theorem}
\newtheorem{lemma}{Lemma}
\newtheorem{conjecture}{Conjecture}
\newtheorem{claim}{Claim}
\newtheorem{subclaim}{Subclaim}
\theoremstyle{definition}
\title{Kempe Equivalent List Colorings Revisited}
\author{Dibyayan Chakraborty\thanks{School of Computing, University of Leeds, United Kingdom, email: \texttt{D.chakraborty@leeds.ac.uk} } \quad Carl Feghali\thanks{Univ. Lyon, EnsL, UCBL, CNRS, LIP, F-69342, Lyon Cedex 07, France, email: \texttt{feghali.carl@gmail.com} }  \quad Reem Mahmoud\thanks{Virginia Commonwealth University, Richmond, VA, USA, email: \texttt{mahmoudr@vcu.edu} }}
\date{}
\begin{document}
\maketitle

\begin{abstract}
A \emph{Kempe chain} on colors $a$ and $b$ is a component of the subgraph induced by colors $a$ and $b$. A \emph{Kempe change} is the operation of interchanging the colors of some Kempe chain. For a list-assignment $L$ and an $L$-coloring $\varphi$, a Kempe change is \emph{$L$-valid} for $\varphi$ if performing the Kempe change yields another $L$-coloring. Two $L$-colorings are \emph{$L$-equivalent} if we can form one from the other by a sequence of $L$-valid Kempe changes. A \emph{degree-assignment} is a list-assignment $L$ such that $L(v)\ge d(v)$ for every $v\in V(G)$. Cranston and Mahmoud (\emph{Combinatorica}, 2023) asked: For which graphs $G$ and degree-assignment $L$ of $G$ is it true that all the $L$-colorings of $G$ are $L$-equivalent?  We prove that for every 4-connected graph $G$ which is not complete and every degree-assignment $L$ of $G$, all $L$-colorings of $G$ are $L$-equivalent.  
 \end{abstract}


\newcommand{\List}[1]{L\left(#1\right)}

\noindent


\section{Introduction}

\emph{Reconfiguration} is the study of moving between different solutions of a problem using a predetermined transformation rule (a \emph{move}). Namely, given two solutions $A$ and $B$ of some underlying problem $X$, one asks whether there is a sequence of moves that transforms $A$ into $B$ so that, in addition, every intermediate term in the sequence is also a solution to $X$. 

Many reconfiguration problems arise from underlying decision problems on graphs. In this note, the underlying decision problem in question is the \emph{coloring problem}. Given a positive integer $k$ and a graph $G$, the coloring problem finds a (proper) \emph{$k$-coloring} of $G$. By $k$-coloring of a graph $G$, we mean a function $f: V(G) \rightarrow \{1, \dots, k\}$ such that $f(u) \not= f(v)$ if $uv \in E(G)$. A \emph{Kempe chain} on colors $a$ and $b$ is a component of the subgraph induced by colors $a$ and $b$. A \emph{Kempe change} is the operation of interchanging the colors of some Kempe chain. Two colorings are \emph{Kempe equivalent} if one can be formed from the other by a sequence of Kempe changes. The \emph{$k$-coloring reconfiguration problem} asks whether two given $k$-colorings of $G$ (i.e. two solutions of the coloring problem) are Kempe equivalent.  This notion of Kempe equivalence is inspired from a well-known attempt of Kempe at proving the Four Color Theorem and has been a popular topic of research since; see ~\cite{BBFJ, BHLN, cranston2022toroidal, las1981kempe, meyniel19785, mohar1, feghali} for some examples. It has also found applications in statistical physics~\cite{mohar2009new} and Markov chains~\cite{salas2022ergodicity}.  
Recently, Cranston and Mahmoud \cite{cranston2021kempe} investigated a coloring reconfiguration problem from a more general notion of coloring known as \emph{listing coloring}. A \emph{list-assignment} $L$ for a graph $G$ assigns a list of colors $L(v)$ for every $v\in V(G)$. A \emph{degree-assignment} is a list-assignment $L$ such that $L(v)\ge d(v)$ for every $v\in V(G)$. An \emph{$L$-coloring} of $G$ is a (proper) coloring $\varphi$ such that $\varphi(v)\in L(v)$ for every $v\in V(G)$. We say $G$ is \emph{$L$-colorable} if it admits an $L$-coloring. For a list-assignment $L$ and an $L$-coloring $\varphi$, a Kempe change is called \emph{$L$-valid} for $\varphi$ if performing the Kempe change yields another $L$-coloring. Two $L$-colorings are called \emph{$L$-equivalent} (or simply Kempe equivalent) if we can form one from the other by a sequence of $L$-valid Kempe changes. The \emph{list coloring reconfiguration problem} asks whether two given $L$-colorings of $G$ are $L$-equivalent.

Cranston and Mahmoud~\cite{cranston2021kempe} established the following result. 

\begin{theorem}[\cite{cranston2021kempe}]\label{thm:cranston}
Let $G$ be a connected graph and $L$ be a list-assignment for $G$. If $|L(x)|=\Delta(G)$ for every $x\in V(G)$ and $\Delta(G)\ge3$, then all $L$-colorings of $G$ are $L$-equivalent unless $L(v) = L(u)$ for every $u, v\in V(G)$ and $G$ is isomorphic to the complete graph or the triangular prism. 
\end{theorem}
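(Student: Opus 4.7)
The plan is to argue by induction on $|V(G)|$. Let $(G, L)$ be a minimum counterexample: a connected graph with $\Delta := \Delta(G) \ge 3$, a list-assignment with $|L(v)| = \Delta$ for every vertex $v$, and two $L$-colorings $\alpha, \beta$ of $G$ that are not $L$-equivalent, such that $(G, L)$ is neither a complete graph nor the triangular prism with constant list. Since $K_{\Delta+1}$ with lists of size $\Delta$ admits no $L$-coloring at all, the only substantive exception to rule out is the triangular prism with uniform list. The small base cases (say $|V(G)| \le \Delta + 3$) are handled by direct inspection.

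The first reduction is to 2-connectivity. If $v$ is a cut vertex with blocks $B_1, \ldots, B_t$ meeting at $v$, then each $B_i$ inherits a list-assignment of size $\Delta \ge \Delta(B_i)$, so induction applies to each block after fixing $\varphi(v)$ common to both sides; Kempe chains inside a block that do not alter the color of $v$ lift unchanged to $G$, so we can connect $\alpha$ and $\beta$ block-by-block. Next, suppose some vertex $v$ has $d(v) < \Delta$; then in every $L$-coloring $\varphi$ some color in $L(v)$ is absent from $N(v)$, providing \emph{slack} at $v$. Applying induction to $G - v$ (with suitably restricted lists on the neighbors of $v$ to account for the interface) yields a Kempe sequence between $\alpha|_{G-v}$ and $\beta|_{G-v}$, and the slack lets us lift each intermediate coloring back to $G$ by single-vertex Kempe moves at $v$. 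Combined with the previous step, we may assume $G$ is 2-connected and $\Delta$-regular.

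The main obstacle is the $\Delta$-regular case, where every vertex has $|L(v)| = d(v)$ and no automatic slack is available. By the Erd\H{o}s--Rubin--Taylor theorem, a connected graph fails to be $L$-colorable for some degree-sized $L$ exactly when every block is a complete graph or an odd cycle; for $\Delta$-regular 2-connected graphs with $\Delta \ge 3$ this forces $G = K_{\Delta+1}$, ruled out by hypothesis. So $G$ is degree-choosable. Here I would exploit local flexibility: if $L$ is non-constant, an edge $uv$ with $L(u) \ne L(v)$ offers a free color that can be introduced along a cycle through $uv$ to route a Kempe chain; if $L$ is constant, one seeks a pair of adjacent vertices $u, v$ sharing a neighborhood structure that supports a two-step Kempe swap (for instance, a cycle of length at least four on which the pattern of colors admits a nontrivial rotation). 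The technically hardest step, which I expect to consume most of the proof, is a direct case analysis ruling out all small $\Delta$-regular graphs as counterexamples and isolating the triangular prism as the unique cubic graph for which these local moves fail to connect the Kempe classes.
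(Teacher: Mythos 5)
This theorem is quoted from Cranston and Mahmoud \cite{cranston2021kempe}; the present paper does not prove it, so your attempt can only be measured against the known proof in that reference and the results it relies on. Your two reductions (splitting at cut vertices, and exploiting a vertex with $d(v)<\Delta$) are in the right spirit, but both are stated too loosely to work as written: a Kempe chain of $G-v$ or of a single block is in general not a Kempe chain of $G$ (the chain may continue through the deleted vertex or through the cut vertex into another block), so intermediate colorings do not ``lift unchanged''; and a block $B_i$ has $\Delta(B_i)$ possibly smaller than $\Delta$, so the inductive hypothesis in the form you stated it (lists of size exactly $\Delta(B_i)$) does not apply --- you need the stronger Las Vergnas--Meyniel/Cranston--Mahmoud lemma that a degree-assignment with strict surplus at one vertex makes all colorings equivalent (Lemma~\ref{lem:cranston} of this paper). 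These are repairable, but they are not details.

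The fatal gap is the case you defer to ``local flexibility'' and ``a direct case analysis'': a $2$-connected $\Delta$-regular graph with a constant list of size $\Delta$. After relabelling colors this is exactly the statement that all $\Delta$-colorings of a connected $\Delta$-regular graph other than $K_{\Delta+1}$ and the triangular prism are Kempe equivalent --- i.e., Mohar's conjecture, proved by Bonamy, Bousquet, Feghali and Johnson \cite{BBFJ} for $\Delta\ge 4$ and by Feghali, Johnson and Paulusma for cubic graphs, each a substantial paper in its own right. Cranston and Mahmoud's proof of the theorem you are asked to prove invokes those results as black boxes; their new content is precisely the analysis when the lists are not all identical. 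Your proposal reduces the problem to its hardest known sub-case and then offers only the intention to find ``a pair of adjacent vertices sharing a neighborhood structure that supports a two-step Kempe swap,'' which is not an argument. As it stands the proposal proves nothing beyond the (comparatively easy) reduction to the regular identical-list case.
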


Towards a strengthening of Theorem \ref{thm:cranston}, Cranston and Mahmoud also asked to characterize all degree-swappable graphs~\cite{cranston2021kempe}, where  
a graph $G$ is \emph{degree-swappable} if for any degree-assignment $L$, all $L$-colorings of $G$ are $L$-equivalent. The purpose of this note is to address this problem.

\begin{theorem}\label{thm:main2}
Every $4$-connected graph distinct from the complete graph is degree-swappable. 
\end{theorem}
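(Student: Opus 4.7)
The approach is induction on $|V(G)|$, building on Theorem~\ref{thm:cranston}. Since a $4$-connected graph has at least five vertices and the unique such graph on five vertices is $K_5$, the base cases of the induction live on six or more vertices (the smallest examples being the octahedron $K_{2,2,2}$ and its close relatives), and will be handled by direct inspection or as special cases of Theorem~\ref{thm:cranston}.

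For the inductive step, fix a $4$-connected non-complete graph $G$, a degree-assignment $L$, and two $L$-colorings $\varphi_1,\varphi_2$. If all lists $L(v)$ coincide, then their common size is at least $\Delta(G)\ge 4$, and Theorem~\ref{thm:cranston} finishes the proof once one observes that the triangular prism (being $3$-regular and only $3$-connected) cannot arise here. When the lists are not all equal, the plan has two steps: (i) use $L$-valid Kempe changes to transform $\varphi_1$ into some $\varphi_1'$ with $\varphi_1'(v)=\varphi_2(v)$ at a carefully chosen vertex $v$; (ii) apply the inductive hypothesis to $G-v$ equipped with the induced list-assignment $L'(u)=L(u)\setminus\{\varphi_2(v)\}$ for $u\sim v$ and $L'(u)=L(u)$ otherwise, which is readily seen to be a degree-assignment for $G-v$.

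The central obstacle is that $G-v$ is only guaranteed to be $3$-connected, so the inductive hypothesis does not apply to $G-v$ directly. To surmount this, I would either strengthen the statement to hold for $3$-connected graphs under an auxiliary structural property inherited from $v$'s removal, or choose $v$ so that $G-v$ remains $4$-connected and non-complete; the latter is not always possible, so it requires a separate case handling those graphs in which every single-vertex deletion destroys $4$-connectivity. A complementary, more global route that avoids vertex-by-vertex induction is a Menger-style argument: exploit the four internally disjoint paths guaranteed by $4$-connectivity between pairs of disagreement vertices of $\varphi_1$ and $\varphi_2$ to produce a Kempe chain whose swap strictly decreases the potential $|\{v : \varphi_1(v)\neq\varphi_2(v)\}|$. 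The hardest regime in either strategy is the fully tight, non-uniform case where $|L(v)|=d(v)$ for every $v$ while the lists vary across $G$, since there is no local slack and the argument must leverage both $4$-connectivity and non-completeness simultaneously — the latter guaranteeing the existence of a pair of non-adjacent vertices whose lists, together with the many paths between them, yield a chain that can be safely swapped.
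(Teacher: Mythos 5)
Your proposal reduces correctly to the case where the lists are not all identical (via Theorem~\ref{thm:cranston}), but beyond that point it is a plan rather than a proof, and the plan has two genuine gaps. First, the inductive step as stated does not close: you acknowledge yourself that $G-v$ is only guaranteed to be $3$-connected, and neither of your proposed repairs is carried out --- choosing $v$ so that $G-v$ stays $4$-connected is, as you note, not always possible, and the ``Menger-style'' alternative of finding a Kempe chain whose swap strictly decreases $|\{v:\varphi_1(v)\neq\varphi_2(v)\}|$ is not substantiated; such greedy potential-decreasing arguments are known to fail in general for Kempe reconfiguration, since a single swap can increase the number of disagreements. Second, and more importantly, your step (i) --- transforming $\varphi_1$ into a coloring that agrees with $\varphi_2$ at a chosen vertex $v$ by $L$-valid Kempe changes --- is precisely the hard content of the theorem, and you give no argument for it. The obstruction is that the Kempe chain at $v$ on the two relevant colors may fail to be swappable because some vertex on the chain lacks the other color in its list; handling that obstruction is where all the work lies.

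The paper avoids induction entirely. The key observation you are missing is Lemma~\ref{lem:cranston}: in a \emph{connected} graph with a degree-assignment, a single vertex $x$ with $|L(x)|>d(x)$ already forces all colorings to be equivalent --- no $4$-connectivity is needed once a vertex is deleted. Consequently (Lemma~\ref{lem:equivalent1}), if $v$ is special with special color $a$, then \emph{all} $L$-colorings assigning $a$ to $v$ form a single Kempe class, because deleting $v$ leaves a surplus color at its special neighbor. The theorem then reduces to showing that every coloring reaches one assigning $a$ to $v$, which the paper does by classifying the obstruction on the chain $H_{\varphi(v),a}(v)$ according to its distance from $v$ (Claims~\ref{claim:1}--\ref{claim:3}), using the Gallai-tree characterization (Lemma~\ref{lem:borodin}) to recolor $G-x$ or $G-\{v,x\}$ and eliminate nearby obstructions, and Lemma~\ref{lem:equiv2} for distant ones. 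If you replace your appeal to the inductive hypothesis on $G-v$ by an appeal to Lemma~\ref{lem:cranston} on the connected graph $G-v$, your step (ii) becomes sound; but step (i) still requires the chain analysis, which is absent from your write-up.
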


In particular, our Theorem \ref{thm:main2} strengthens Theorem \ref{thm:cranston} for $4$-connected graphs. Note that Cranston and Mahmoud~\cite{cranston2021kempe} gave examples of $2$-connected graphs which are not degree-swappable. We conjecture the following.

\begin{conjecture}
Every $3$-connected graph distinct from the complete graph and the triangular prism is degree-swappable.    
\end{conjecture}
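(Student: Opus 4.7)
The plan is to prove the conjecture by induction on $|V(G)|$, with Theorem~\ref{thm:main2} as the base case for 4-connected graphs. Suppose, for contradiction, that $(G,L)$ is a counterexample minimizing $|V(G)|$. Then $G$ is 3-connected, $G\neq K_n$, $G$ is not the triangular prism, and Theorem~\ref{thm:main2} implies $G$ is not 4-connected. Hence $G$ admits a 3-cut $S=\{x,y,z\}$; write the components of $G-S$ as $A_1,\ldots,A_k$ with $k\ge 2$, and set $H_i:=G[A_i\cup S]$, possibly with the edges of a triangle on $S$ added so that no Kempe chain confined to $H_i$ can recolor a vertex of $S$. We need the restriction of $L$ to $V(H_i)$ to remain a degree-assignment for $H_i$; this is automatic for $v\in A_i$, and for $v\in S$ it holds after verifying that the added triangle edges do not violate $|L(v)|\ge d_{H_i}(v)$ (when they would, we drop the triangle edges and freeze the colors on $S$ by other means during local moves).

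Given two $L$-colorings $\varphi$ and $\psi$, the argument splits into two parts. In Part~1, assuming $\varphi|_S=\psi|_S$, we reconfigure one side at a time: by minimality each $H_i$ is degree-swappable (except when $H_i$ is $K_n$ or the triangular prism, in which cases the restricted structure is very small and is handled directly), and any $L_i$-valid Kempe change of $H_i$ lifts to an $L$-valid Kempe change of $G$ because its support lies in $V(H_i)$. Part~2 shows that $\varphi$ can be moved by a sequence of $L$-valid Kempe changes in $G$ to an $L$-coloring $\varphi'$ with $\varphi'|_S=\psi|_S$; then Part~1 completes the argument. To alter, say, $\varphi(x)$, we attempt a Kempe chain on $\varphi(x)$ and some $c\in L(x)\setminus\{\varphi(x)\}$ that is confined to $A_i\cup\{x\}$ for a single $i$ and avoids $\{y,z\}$; 3-connectivity of $G$ together with the degree-assignment condition provides, via a Menger-type analysis on each $H_i$, enough freedom to find such a chain, and iterating (with care about order) transports the cut colors from $\varphi|_S$ to $\psi|_S$ one coordinate at a time.

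The main obstacle I anticipate is the case analysis when some $H_i$ is \emph{bad} — isomorphic to a complete graph or to the triangular prism, or too small ($|A_i|\in\{1,2\}$) for the inductive hypothesis to apply. Here the configuration around $A_i$ is tightly constrained, so Part~1 cannot reconfigure inside $H_i$ directly, and one must push Kempe chains through $S$ into the complementary side $\bigcup_{j\neq i}A_j$ in order to unlock the rigid piece. A second, closely related subtlety is that the excluded graphs (complete and triangular prism) can appear locally as some $H_i$ even when the global $G$ avoids them, so the argument must exploit the global hypothesis $G\neq K_n$ and $G$ not the prism to guarantee sufficient slack on the complementary side to drive the needed color changes on $S$. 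Resolving these local-to-global exceptions — especially the triangular prism appearing as a local piece — is where a genuinely new idea beyond the 4-connected proof of Theorem~\ref{thm:main2} is likely to be needed.
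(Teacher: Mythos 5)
This statement is an open conjecture in the paper: the authors prove only the $4$-connected case (Theorem~\ref{thm:main2}) and explicitly leave the $3$-connected case unresolved, so there is no proof of record to compare yours against. More importantly, what you have written is a plan rather than a proof, and the places where it is vague are exactly the places where it breaks. First, the lifting step in Part~1 is not sound as stated: a Kempe chain of $H_i$ that merely \emph{contains} a vertex of $S$ (without recoloring it in your intended sense) is, in $G$, a component of a larger two-colored subgraph that extends through $S$ into the other components, so the $L_i$-valid change in $H_i$ need not correspond to any $L$-valid change in $G$. Adding a triangle on $S$ does not fix this, and it creates its own problem, which you acknowledge but do not resolve: for a cut vertex $v\in S$ with few neighbors outside $A_i$, the added edges can force $d_{H_i}(v)>|L(v)|$, so the restricted list is no longer a degree-assignment and the inductive hypothesis does not apply; ``freeze the colors on $S$ by other means'' is not an argument. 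Second, Part~2 --- transporting $\varphi|_S$ to $\psi|_S$ one coordinate at a time by chains confined to a single $A_i\cup\{x\}$ --- is asserted via an unspecified ``Menger-type analysis''; nothing in $3$-connectivity guarantees such a confined chain exists, and this is precisely the kind of obstruction that makes the $2$-connected counterexamples of Cranston and Mahmoud fail to be degree-swappable.

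Finally, you yourself flag that the cases where some $H_i$ is complete, a triangular prism, or too small require ``a genuinely new idea.'' Those rigid local pieces are the heart of the difficulty (note that the triangular prism is excluded in the conjecture for a reason: it genuinely fails), so conceding them concedes the proof. The proposal identifies a reasonable decomposition strategy and correctly anticipates where the obstacles lie, but it does not close any of them; as it stands it is a research programme, not a proof of the conjecture.
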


The paper is organized as follows. In Section \ref{sec:lem}, we state some lemmas that will be used repeatedly in the proof of Theorem \ref{thm:main2}. In Section \ref{sec:idea}, we give the proof idea. Finally, in Section \ref{sec:main}, we give the proof of Theorem \ref{thm:main2}.  

\section{Key lemmas}\label{sec:lem}

A \emph{Gallai tree} is a connected graph $T$ such that every 2-connected block
of $T$ is either a clique or an odd cycle. Suppose $B_1, \dots, B_k$ are the blocks of
a Gallai tree $T$, and let $S_1, \dots, S_k$ be sets of colors satisfying the following
conditions:
\begin{itemize}
\item For $1 \leq i \leq k$,  if $B_i$ is a clique, then $|S_i| = |V(B_i)| - 1$, and if $B_i$ is an odd cycle, then $|S_i| = 2$.
\item For $1 \leq i < j \leq k$, if $B_i \cap B_j \not= \emptyset$, then $S_i \cap S_j = \emptyset$. 
\end{itemize}
If $L(v) = \bigcup_{v \in V(B_i)} S_i$ for $v \in V(T)$, then $L$ is called \emph{blockwise uniform}. 

We shall repeatedly use the following two lemmas, the first lemma being a classical result in graph coloring. 

\begin{lemma}[\cite{gallai1963kritische, borodin1979problems,erdos1979choosability}]\label{lem:borodin}
Let $G$ be a connected graph and $L$ be a degree-assignment for $G$. The graph $G$ is not $L$-colorable if and only if $G$ is a Gallai tree and $L$ is blockwise uniform. 
\end{lemma}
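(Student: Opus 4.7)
The statement is the classical Gallai–Borodin–Erdős–Rubin–Taylor characterisation of \emph{degree-choosable} graphs, and my plan mirrors the standard proof with two directions. For the backward direction (Gallai tree with $L$ blockwise uniform $\Rightarrow$ not $L$-colourable), I would induct on the number of blocks. The base case of a single block is immediate: either $B = K_n$ with every $L(v)$ equal to a common $(n-1)$-set (cannot colour $n$ pairwise-adjacent vertices with $n-1$ colours), or $B$ is an odd cycle with every $L(v)$ equal to a common $2$-set (odd cycles are not $2$-colourable). For the inductive step, pick a leaf block $B_i$ of the block-tree with unique cut vertex $x$. Every non-cut vertex $v \in V(B_i)$ has $L(v)=S_i$, so the base-case obstruction forces any putative $L$-coloring $\varphi$ to satisfy $\varphi(x) \notin S_i$. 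Deleting $V(B_i)\setminus\{x\}$ and replacing $L(x)$ by $L(x)\setminus S_i$ yields a Gallai tree on one fewer block that is still blockwise uniform, contradicting induction.

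For the forward direction I would argue the contrapositive by induction on $|V(G)|$: if $G$ is not a Gallai tree, or $L$ is not blockwise uniform, then $G$ is $L$-colorable. The key \emph{surplus reduction} is that if some vertex $v$ has $|L(v)|>d(v)$, then colouring in reverse BFS order rooted at $v$ succeeds, since every non-root vertex has an uncoloured BFS-parent and hence fewer than $|L|$ already-coloured neighbours, while $v$ itself has surplus on its full list. So I may assume $|L(v)|=d(v)$ everywhere. If $G$ has a cut vertex, pick a leaf block $B$ with cut vertex $x$; if either $B$ or $G-(V(B)\setminus\{x\})$ violates the Gallai/blockwise-uniform condition, apply induction to that piece and extend the other piece via the surplus reduction, which is now available at $x$ because one of the two sides has used fewer than the full shared list.

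The main obstacle, and where most of the work lies, is the \textbf{2-connected case}: every 2-connected graph that is neither a clique nor an odd cycle must be shown $L$-colorable for every degree-assignment $L$. The classical approach is a Whitney-type argument producing two non-adjacent vertices $u,v$ with a common neighbour $w$ such that $G-\{u,v\}$ remains connected, and then colouring $u$ and $v$ with a single colour $c$ chosen carefully from $L(u)$ and $L(v)$ (splitting on whether $L(u)=L(v)$ and on whether $L(u)\cap L(v)\ne\emptyset$). After this identification, $w$ has a list of size $d(w)$ but at most $d(w)-1$ uncoloured neighbours in $G-\{u,v\}$, so the surplus reduction completes the colouring. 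Because the statement is already established in each of \cite{gallai1963kritische, borodin1979problems, erdos1979choosability}, I would quote their bookkeeping for the 2-connected reduction rather than reconstruct it, and would only verify that the pieces glue together as described above.
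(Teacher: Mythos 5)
The paper does not prove this lemma at all: it is imported verbatim as a classical result of Gallai, Borodin, and Erd\H{o}s--Rubin--Taylor, with the three citations standing in for a proof. So there is no in-paper argument to compare against, and your outline should be judged on its own. It is the standard textbook proof and is essentially sound: the backward direction (induction on blocks, forcing $\varphi(x)\notin S_i$ at the cut vertex of a leaf block and peeling that block off) is complete and correct, and the surplus/greedy reduction is exactly the mechanism the paper itself reuses as Lemma~\ref{lem:cranston}. The two places where your sketch is loosest are precisely the places you defer to the references. First, in the $2$-connected case, when $L(u)\cap L(v)=\emptyset$ you cannot ``colour $u$ and $v$ with a single colour $c$'' at all; the classical resolution of that subcase is a different argument (e.g.\ locate an edge $xy$ with a colour in $L(x)\setminus L(y)$, colour $x$ with it, and finish by the surplus reduction ending at $y$), so the phrase ``chosen carefully'' is hiding a genuine case rather than mere bookkeeping. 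Second, the gluing at a cut vertex requires an explicit partition of $L(x)$ between the leaf block and the rest so that each piece receives a degree-assignment; ``one of the two sides has used fewer than the full shared list'' gestures at this but does not pin it down. Since you explicitly quote the cited sources for these steps --- which is all the paper does for the entire lemma --- the proposal is acceptable as a reconstruction, but those two points are where a self-contained write-up would need real work.
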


For a vertex $v \in V(G)$ and a list-assignment $L$,  call $v$  \emph{special} (with respect to $L$) if $v$ has a neighbor $u$ such that $L(v) \setminus L(u) \not= \emptyset$. In this case, $u$ is called a \emph{special neighbor of $v$} and every color in $L(v) \setminus L(u)$ is called \emph{special for $v$}. 

\begin{lemma}\label{lem:equiv2}
    Let $G$ be a $3$-connected graph and $L$ be a degree-assignment for $G$. If $x$ and $y$ are non-adjacent special vertices in $G$ with special colors $a$ and $b$, respectively (possibly $a=b$), then all $L$-colorings of $G$ that either assign color $a$ to $x$ or color $b$ to $y$ are $L$-equivalent.
\end{lemma}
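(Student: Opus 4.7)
The plan is to prove the lemma via a bridge coloring: I would first exhibit an $L$-coloring $\varphi^{\star}$ of $G$ with both $\varphi^{\star}(x) = a$ and $\varphi^{\star}(y) = b$, and then show that every $L$-coloring $\varphi$ satisfying either $\varphi(x) = a$ or $\varphi(y) = b$ is $L$-equivalent to $\varphi^{\star}$. Together these yield the lemma.

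For existence, let $u$ and $w$ be special neighbors of $x$ and $y$ witnessing $a \notin L(u)$ and $b \notin L(w)$, and set $H := G - \{x, y\}$, which is connected since $G$ is $3$-connected and $xy \notin E(G)$. Define $L_H$ on $H$ by removing $a$ from the lists of vertices in $N_G(x)$ and $b$ from the lists of vertices in $N_G(y)$. A direct count shows this is a degree-assignment on $H$, and $u$ retains an extra slot beyond $d_H(u)$ because nothing is actually removed from $L(u)$ (similarly at $w$). Blockwise uniformity would force $|L_H(v)| = d_H(v)$ for every $v \in V(H)$, so the extra slack at $u$ rules it out; Lemma~\ref{lem:borodin} then yields an $L_H$-coloring of $H$, which extends to $\varphi^{\star}$ by setting $\varphi^{\star}(x) := a$ and $\varphi^{\star}(y) := b$.

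For reachability, fix an $L$-coloring $\varphi$ with $\varphi(x) = a$ and consider $G' := G - x$ with the degree-assignment $L'$ obtained by removing $a$ from the lists of $N_G(x)$. Both $\varphi|_{G'}$ and $\varphi^{\star}|_{G'}$ are $L'$-colorings of the $2$-connected graph $G'$, and $u$ again retains an extra slot beyond $d_{G'}(u)$. Any Kempe change in $G'$ not involving color $a$ lifts to an $L$-valid Kempe change of $G$ preserving $\varphi(x) = a$, since $x$ (colored $a$) sits in no such chain; an $(a, d)$-Kempe change in $G'$ lifts provided its chain is not connected to $x$ in $G$ via a $d$-colored neighbor. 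The task reduces to connecting $\varphi|_{G'}$ and $\varphi^{\star}|_{G'}$ in $G'$ using only such liftable moves, and the symmetric argument handles the case $\varphi(y) = b$.

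I expect the main obstacle to be this reachability step. Since $G'$ is only $2$-connected, Theorem~\ref{thm:cranston} does not apply off-the-shelf, and some natural $(a, d)$-Kempe chains in $G'$ correspond in $G$ to chains that swallow $x$ and would destroy $\varphi(x) = a$. The key structural lever is that $u$ can never be colored $a$, so color $a$ cannot flow through $u$; combined with the extra slack at $u$ and a careful case analysis on the block structure of $G'$ (once more appealing to Lemma~\ref{lem:borodin} when $G'$ is close to a Gallai tree), one should be able to reroute each problematic Kempe change into a sequence of liftable ones.
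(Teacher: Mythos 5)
Your global architecture is the same as the paper's: exhibit one $L$-coloring $\varphi^{\star}$ with $\varphi^{\star}(x)=a$ and $\varphi^{\star}(y)=b$, and separately show that the class of colorings with $x\mapsto a$ (resp.\ $y\mapsto b$) is internally $L$-equivalent. Your existence step is essentially the paper's (the paper colors $G-\{x,y\}$ greedily along a spanning tree rooted at a special neighbor of $x$; you invoke Lemma~\ref{lem:borodin} instead, and blockwise uniformity is indeed incompatible with the slack at $u$ --- note only that when $u\in N(y)$ and $b\in L(u)$ something \emph{is} removed from $L(u)$, but the count $|L_H(u)|\ge |L(u)|-1\ge d_G(u)-1=d_H(u)+1$ still gives the slack, so your conclusion stands).

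The genuine gap is the reachability step, which you yourself flag as ``the main obstacle'' and leave at ``one should be able to reroute each problematic Kempe change.'' That is not a proof, and as written the lemma is not established. The step you need is exactly Lemma~\ref{lem:equivalent1}: since $a$ is special for $x$ and $G$ is $2$-connected, \emph{all} $L$-colorings assigning $a$ to $x$ are $L$-equivalent, so $\varphi$ is equivalent to $\varphi^{\star}$ at once; the paper simply cites this and is done. If you insist on arguing from scratch, the obstacle you anticipate in fact dissolves once you work with $L'$-\emph{valid} changes in $G'=G-x$: an $(a,d)$-Kempe change in $G'$ that is $L'$-valid cannot recolor any neighbor of $x$ with $a$ (you removed $a$ from those lists), hence its chain contains no $d$-colored neighbor of $x$ and is therefore also a full component of the $(a,d)$-subgraph of $G$; so \emph{every} $L'$-valid change lifts to an $L$-valid change fixing $\varphi(x)=a$. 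Combined with Lemma~\ref{lem:cranston} applied to $(G-x,L')$ --- which applies precisely because of the slack $|L'(u)|>d_{G-x}(u)$ you already observed --- this closes the gap with no case analysis on the block structure of $G'$ and no appeal to Theorem~\ref{thm:cranston}. You assembled all the ingredients but stopped one citation short of the conclusion.
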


To prove Lemma \ref{lem:equiv2}, we require the following two auxiliary results. 

\begin{lemma}[\cite{cranston2021kempe,las1981kempe}]\label{lem:cranston}
Let $G$ be a connected graph and $L$ be a degree-assignment for $G$. If $|L(x)| > d(x)$ for some $x \in V(G)$, then all $L$-colorings of $G$ are $L$-equivalent. 
\end{lemma}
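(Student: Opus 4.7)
\medskip
\noindent\textbf{Proof proposal.}
The plan is to reduce everything to Lemma~\ref{lem:cranston} by using the special structure to create a surplus in the list of some vertex after contracting/deleting $x$ and/or $y$. Let $u$ be a special neighbor of $x$ witnessing $a \in L(x) \setminus L(u)$, and let $w$ be a special neighbor of $y$ witnessing $b \in L(y) \setminus L(w)$. Since $x$ and $y$ are non-adjacent, we have $u \neq y$ and $w \neq x$.

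First I would show that any two $L$-colorings $\varphi_1, \varphi_2$ with $\varphi_i(x) = a$ are $L$-equivalent. Consider $H = G - x$, which is connected since $G$ is $3$-connected. Define $L^\ast$ on $V(H)$ by $L^\ast(v) = L(v) \setminus \{a\}$ if $v \in N_G(x)$ and $L^\ast(v) = L(v)$ otherwise. Then $L^\ast$ is a degree-assignment for $H$, and moreover $|L^\ast(u)| = |L(u)| \ge d_G(u) = d_H(u) + 1$ because $a \notin L(u)$ so no color is removed from $u$'s list. Hence $|L^\ast(u)| > d_H(u)$, and Lemma~\ref{lem:cranston} gives that all $L^\ast$-colorings of $H$ are $L^\ast$-equivalent. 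The restrictions of $\varphi_1$ and $\varphi_2$ to $V(H)$ are such $L^\ast$-colorings, so a sequence of $L^\ast$-valid Kempe changes transforms one into the other; none of these chains includes $x$ (since none contains color $a$ at a neighbor of $x$, and $x$ itself keeps color $a$), so this sequence is also $L$-valid in $G$. By symmetry, the same holds for all colorings with $\psi(y) = b$.

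Second, I would produce an $L$-coloring $\eta$ with $\eta(x) = a$ and $\eta(y) = b$ simultaneously. Consider $H' = G - x - y$, which is connected since $G$ is $3$-connected. Define $L^{\ast\ast}(v) = L(v) \setminus (\{a\} \cap \chi_{N(x)}(v)) \setminus (\{b\} \cap \chi_{N(y)}(v))$, i.e., delete $a$ from lists of neighbors of $x$ and $b$ from lists of neighbors of $y$. A direct degree count shows $L^{\ast\ast}$ is a degree-assignment for $H'$, and checking the vertex $u$ (which lies in $V(H')$ as $u \neq y$) in the three subcases $u \notin N(y)$, $u \in N(y)$ with $b \notin L(u)$, and $u \in N(y)$ with $b \in L(u)$ shows that in every case $|L^{\ast\ast}(u)| > d_{H'}(u)$, using that $a \notin L(u)$. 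By Lemma~\ref{lem:cranston} (applied just for the existence statement), $H'$ has an $L^{\ast\ast}$-coloring, and extending by $x \mapsto a$, $y \mapsto b$ yields a proper $L$-coloring $\eta$ of $G$ (proper even when $a = b$, since $x$ and $y$ are non-adjacent).

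Finally, combining the three facts: every $L$-coloring $\varphi$ with $\varphi(x) = a$ is $L$-equivalent to $\eta$, and every $L$-coloring $\psi$ with $\psi(y) = b$ is $L$-equivalent to $\eta$, so all colorings in the union of these two families are pairwise $L$-equivalent. The main obstacle I anticipate is the degree bookkeeping for the vertex $u$ inside $H'$, since one must carefully use that $a \notin L(u)$ to offset the simultaneous deletions of $x$ and $y$; the non-adjacency of $x$ and $y$ and the fact that the special neighbor $u$ is the one missing the special color are exactly what makes the surplus appear in each case.
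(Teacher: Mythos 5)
Your proposal does not prove the stated lemma. Lemma~\ref{lem:cranston} concerns a single connected graph $G$ with a degree-assignment $L$ and one vertex $x$ satisfying $|L(x)| > d(x)$; there is no second vertex $y$, no special colors, and no $3$-connectivity hypothesis. What you have written is a proof attempt for Lemma~\ref{lem:equiv2} (the statement about two non-adjacent special vertices $x$ and $y$ with special colors $a$ and $b$ in a $3$-connected graph). Moreover, your argument invokes Lemma~\ref{lem:cranston} twice as a black box --- once to conclude that all $L^\ast$-colorings of $G-x$ are $L^\ast$-equivalent, and once to obtain the existence of an $L^{\ast\ast}$-coloring of $G-x-y$ --- so, read as a proof of Lemma~\ref{lem:cranston} itself, it is circular and establishes nothing.

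For reference, the paper does not prove Lemma~\ref{lem:cranston} either: it is imported from Las Vergnas--Meyniel and Cranston--Mahmoud, and a genuine proof requires a different argument (an induction exploiting the surplus at $x$, in the spirit of the degree-choosability arguments of Borodin and Erd\H{o}s--Rubin--Taylor adapted to Kempe changes). If your text is instead intended as a solution to Lemma~\ref{lem:equiv2}, it is close in spirit to the paper's proof of that lemma, which likewise deletes $x$ (respectively $\{x,y\}$), trims the lists of the deleted vertices' neighbors, and exploits the surplus at the special neighbor $z$; the differences are that the paper routes the ``all colorings with $\varphi(x)=a$ are equivalent'' step through Lemma~\ref{lem:equivalent1}, and constructs the common coloring by a greedy coloring along a spanning tree ordered by decreasing distance from $z$ rather than by citing an equivalence lemma for mere existence.
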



\begin{lemma}[\cite{cranston2021kempe}]\label{lem:equivalent1}
Let $G$ be a $2$-connected graph and $L$ be a degree-assignment for $G$. If $x$ is a special vertex in $G$, then for every special color $a$ of $x$, all $L$-colorings of $G$ that assign color $a$ to $x$ are $L$-equivalent. 
\end{lemma}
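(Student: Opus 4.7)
The plan is to reduce the statement to the existence of a single coloring. I aim to produce one $L$-coloring $\varphi$ of $G$ with $\varphi(x)=a$ and $\varphi(y)=b$. Once such $\varphi$ exists, Lemma~\ref{lem:equivalent1} applied to $x$ with special color $a$ shows every $L$-coloring assigning $a$ to $x$ is $L$-equivalent to $\varphi$; applied to $y$ with special color $b$, it shows every $L$-coloring assigning $b$ to $y$ is likewise $L$-equivalent to $\varphi$. Transitivity then yields the conclusion (the case $a=b$ causes no issue because $x$ and $y$ are non-adjacent).

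To build $\varphi$, I would work on $G' := G - \{x,y\}$. Since $G$ is 3-connected and contains two non-adjacent vertices (so $|V(G)|\ge 5$), the graph $G'$ is connected. Define a list-assignment $L'$ on $V(G')$ by removing $a$ from $L(v)$ whenever $v\in N(x)$, and removing $b$ from $L(v)$ whenever $v\in N(y)$. A short case analysis on whether $v$ is adjacent to none, one, or both of $x$ and $y$ shows $|L'(v)|\ge d_{G'}(v)$ for every $v\in V(G')$, so $L'$ is a degree-assignment for $G'$.

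Next I would invoke Lemma~\ref{lem:borodin} to obtain an $L'$-coloring of $G'$. The only obstruction is that $G'$ might be a Gallai tree with $L'$ blockwise uniform. The key observation is that in any blockwise uniform instance, $|L'(v)|=d_{G'}(v)$ holds exactly at every vertex (each block containing $v$ contributes to $L'(v)$ exactly as many colors as $v$ has neighbors inside that block). I rule this out by exhibiting slack at the special neighbor $u$ of $x$: because $a$ is special for $x$ with respect to $u$ we have $a\notin L(u)$, so deleting $a$ from $L(u)$ changes nothing, while $d_{G'}(u)<d_G(u)$ because $u\in N(x)$. A quick subcase check (depending on whether $u\in N(y)$) gives $|L'(u)|>d_{G'}(u)$, so $L'$ is not blockwise uniform and Lemma~\ref{lem:borodin} delivers an $L'$-coloring $\varphi'$ of $G'$.

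Finally, extending $\varphi'$ by $\varphi(x):=a$ and $\varphi(y):=b$ produces a proper $L$-coloring of $G$: the deletion of $a$ from the lists of $N(x)$ prevents any neighbor of $x$ from receiving $a$, symmetrically for $y$, and the missing edge $xy$ removes the last potential conflict. The conclusion then follows from the two applications of Lemma~\ref{lem:equivalent1} described in the first paragraph. The cleanest part of the argument is this reduction to finding a single coloring; the main technical point is verifying that the slack at $u$ always suffices to defeat the Gallai-tree obstruction. The hypothesis of 3-connectedness is used only to ensure $G'$ is a single component, so that slack at the one vertex $u$ is enough.
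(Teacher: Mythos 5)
Your proposal does not prove the statement in question. The statement is Lemma~\ref{lem:equivalent1}: for a $2$-connected graph $G$, a single special vertex $x$ with special color $a$, all $L$-colorings assigning $a$ to $x$ are $L$-equivalent. What you have written is instead an argument for Lemma~\ref{lem:equiv2} (the $3$-connected statement about two non-adjacent special vertices $x$ and $y$ with special colors $a$ and $b$), and --- fatally for present purposes --- your first paragraph explicitly \emph{invokes} Lemma~\ref{lem:equivalent1} twice as a known tool. As a proof of Lemma~\ref{lem:equivalent1} itself, the argument is therefore circular; it establishes nothing about that lemma. Producing a single coloring $\varphi$ with $\varphi(x)=a$ cannot by itself show that \emph{all} colorings assigning $a$ to $x$ are mutually $L$-equivalent: that is precisely the content of the lemma you were asked to prove, and it requires an actual reconfiguration argument, not just an existence argument.

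For comparison, the paper's proof of Lemma~\ref{lem:equivalent1} is a reduction to Lemma~\ref{lem:cranston}: delete $x$, remove $a$ from the lists of all neighbors of $x$ to obtain a degree-assignment $L'$ for the connected graph $G-x$; the special neighbor $z$ of $x$ (which satisfies $a\notin L(z)$, so its list is not shortened, while its degree drops) has $|L'(z)|>d_{G-x}(z)$, whence all $L'$-colorings of $G-x$ are $L'$-equivalent, and these correspond exactly to the $L$-colorings of $G$ with $x$ colored $a$. Your existence argument (Gallai-tree obstruction, slack at the special neighbor) is a reasonable route to the coloring needed in Lemma~\ref{lem:equiv2} --- the paper gets it by a greedy ordering along a spanning tree instead --- but it addresses a different lemma and, even there, only the easy half of the work.
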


\begin{proof}
    Let $L'$ be the list-assignment for $G - x$ defined by $L'(w) = L(w) \setminus \{a\}$ if $w \in N(x)$ and $L'(w) = L(w)$ otherwise. Let $z$ be a special neighbor of $x$; that is, $a\notin L(z)$. Observe that $L'$ is a degree-assignment for $G-x$ and that $|L'(z)| > d_{G - x}(z)$. Since $G - x$ is connected (as $G$ is $2$-connected), all $L'$-colorings of $G - x$ are $L'$-equivalent, by Lemma \ref{lem:cranston}. Thus, all $L$-colorings of $G$ that assign color $a$ to $x$ are $L$-equivalent. 
\end{proof}


\begin{proof}[Proof of Lemma \ref{lem:equiv2}]
     Let $L_1$ and $L_2$ be the set of $L$-colorings of $G$ that assign color $a$ to $x$ and $b$ to $y$, respectively. By Lemma \ref{lem:equivalent1}, all $L$-colorings of $L_1$ are $L$-equivalent. Similarly, all $L$-colorings of $L_2$ are $L$-equivalent. Thus, it remains to show that every $L$-coloring of $L_1$ is $L$-equivalent to every $L$-coloring of $L_2$. This is achieved by showing that $G$ admits an $L$-coloring $\varphi$ with $\varphi\in L_1\cap L_2$. Indeed, every $L$-coloring of $L_1$ is therefore $L$-equivalent to every $L$-coloring of $L_2$ via $\varphi$. 
     
     Let $z$ be a special neighbor of $x$; that is, $a\notin L(z)$. Observe that $z\neq y$ since $x$ and $y$ are non-adjacent. Let $L'$ be the list-assignment for $G - \{x,y\}$ defined by
     \begin{itemize}
         \item  $L'(w) = L(w) \setminus \{a\}$ if $w \in N(x) \setminus N(y)$,
         \item  $L'(w) = L(w) \setminus \{b\}$ if $w \in N(y) \setminus N(x)$,
         \item  $L'(w) = L(w) \setminus \{a,b\}$ if $w \in N(x) \cap N(y)$, and
         \item $L'(w) = L(w)$ otherwise. 
     \end{itemize}
    Observe that $L'$ is a degree-assignment for $G-\{x,y\}$ and $|L'(z)|>d_{G - \{x,y\}}(z)$. Further, $G-\{x,y\}$ is connected, since $G$ is 3-connected. Let $T$ be a spanning tree of $G - \{x, y\}$ rooted at $z$. List the vertices of $G - \{x,y\}$ in 
     order of decreasing distance from $z$, then color them greedily from $L'$ following this order to obtain an $L'$-coloring $\varphi'$ of $G-\{x,y\}$. This is always possible since each vertex $w\neq z$ in the ordering has at least one uncolored neighbor (its parent in $T$), so $w$ has a color in $L'(w)$ which does not appear on its already colored neighbors; moreover, $z$ has a color in $L'(z)$ which does not appear on its neighbors since $|L'(z)|>d_{G - \{x,y\}}(z)$. Finally, we extend $\varphi'$ to $\varphi \in L_1 \cap L_2$ by coloring $x$ and $y$ with $a$ and $b$, respectively. 
\end{proof}

\section{Outline of the proof}\label{sec:idea}

The main idea in the proof of Theorem \ref{thm:main2} is simple. It consists in making it possible to apply Lemma \ref{lem:equivalent1}. To be able to do so, note, thanks to Theorem \ref{thm:cranston}, that one can assume $|L(v)| \not= \Delta(G)$ for some $v \in V(G)$. This along with the fact that $L$ is a degree-assignment for $G$ allows us to assume, by the pigeonhole principle, that $v$ is special. It is then perhaps evident how Lemma \ref{lem:equivalent1} becomes applicable: If we can show that every $L$-coloring of $G$ is $L$-equivalent to some $L$-coloring that assigns the special color of $v$, say $a$, to $v$, then Theorem \ref{thm:main2} follows. There is one main challenge in doing this: For an $L$-coloring $\varphi$ of $G$, if $\varphi(v) = a$ or if $\varphi(v) \not=a$ but applying a Kempe change at the Kempe chain on colors $a$ and $\varphi(v)$ that contains $v$ is $L$-valid, then our aim is obviously achieved. The proof thus reduces to showing that, for every $L$-coloring $\varphi$ such that $\varphi(v) \not=a$ and for which the Kempe change that interchanges color $a$ and $\varphi(v)$ at $v$ is not $L$-valid, $\varphi$ is, nevertheless, Kempe equivalent, via a sequence of (possibly several) Kempe changes, to some $L$-coloring that assigns color $a$ to $v$. 

Precisely, let $H_{a, \varphi(v)}(v)$ denote the Kempe chain on colors $a$ and $\varphi(v)$ that contains $v$. Since a Kempe change on $H_{a, \varphi(v)}(v)$ is not L-valid, there exists a vertex $u$ different from $v$ in $H_{a, \varphi(v)}(v)$ such that $$(\{a, \varphi(v)\} \setminus \{\varphi(u)\}) \notin L(u).$$ We consider three cases depending on the distance of $u$ to $v$ in $H_{a, \varphi(v)}(v)$:
\begin{itemize}
    \item[(1)] If $u$ is a neighbor of $v$ (see Claim \ref{claim:1})
    \item[(2)] If $u$ is at distance $2$ from $v$ (see Claims \ref{claim:2} and \ref{claim:3}), and
    \item[(3)] If $u$ is at distance at least $3$ from $v$ (see the end of the proof). 
\end{itemize}
To be slightly more precise, Claim \ref{claim:1} essentially reduces case (1) to (2) or (3) while Claims \ref{claim:2} and \ref{claim:3} treat case (2). To finish off the proof, Lemma \ref{lem:equiv2} is then evoked towards the end to settle case (3).

\section{The proof of Theorem \ref{thm:main2}}\label{sec:main}






Let $G$ be a graph with list-assignment $L$. For a vertex $v\in V(G)$, denote by \emph{$N^k(v)$} the set of vertices at distance (exactly) $k$ from $v$ (with $N^1(v)=N(v)$). 

Fix an $L$-coloring $\varphi$ of $G$, a vertex $v\in V(G)$, and a color $a\in L(v)$ with $a\neq \varphi(v)$. Let \emph{$H_{\varphi(v), a}(v)$} denote the Kempe chain on colors $\varphi(v)$ and $a$ under $\varphi$ that contains $v$. (By definition, the vertices in $N^i(v)\cap H_{\varphi(v), a}(v)$ are colored with $a$ and with $\varphi(v)$ for all odd and even values, respectively, of $i$.)  Let
\begin{itemize}
    \item $N^1_v(\varphi(v),a) = \{u: u \in N(v) \cap H_{\varphi(v), a}(v),  \varphi(v) \not\in L(u)\}$
    \item $ N^2_v(\varphi(v), a) = \{u: u \in N^2(v) \cap H_{\varphi(v), a}(v),  a \not\in L(u)\}$
    \item $N^{3^+}_v(\varphi(v), a) = \{u: u \in N^k(v) \cap H_{\varphi(v), a}(v),  \{\varphi(v),a\}\nsubseteq L(u), k\ge3\}$
\end{itemize}

\begin{proof}[Proof of Theorem \ref{thm:main2}]
 Since $G$ is 4-connected, $\Delta(G)\ge\delta(G)\ge4$ and so $G$ is not isomorphic to the triangular prism. If $L$ is identical everywhere (that is, $|L(v)|\ge\Delta(G)$ for every $v\in V(G)$), then we are done by Lemma~\ref{lem:cranston} or Theorem~\ref{thm:cranston}. Thus, we can assume that $L$ is not identical everywhere. So, there exist adjacent $u,v\in V(G)$ and a color $a\in L(v)$ such that $a \not\in L(u)$; hence, $v$ is special, $a$ is special for $v$, and $u$ is a special neighbor of $v$. Obviously, we can further assume $v$ has maximum degree in $G$. Let $L_1$ be the set of all $L$-colorings of $G$ that assign color $a$ to $v$. By Lemma \ref{lem:equivalent1}, all $L$-colorings of $L_1$ are Kempe equivalent. 
 
 Our aim in the remainder of the proof is to show that every $L$-coloring $\varphi\notin L_1$ is $L$-equivalent to some $L$-coloring in $L_1$. Now, if we can perform a Kempe change on $H_{\varphi(v), a}(v)$ starting from $\varphi$, our aim is obviously achieved. So, we can assume this Kempe change is not $L$-valid.  This implies either \begin{itemize}
    \item $N^1_v(\varphi(v),a) \not= \emptyset$,
    \item $N^2_v(\varphi(v),a) \not=\emptyset$, or
    \item $N^{3^+}_v(\varphi(v),a)\not=\emptyset$. 
\end{itemize}

We treat each of the above three cases separately in the following three claims, from which Theorem \ref{thm:main2} follows rather easily. 

\begin{claim}\label{claim:1}
    Fix any $x\in V(G)$ and any $L$-coloring $\psi$ of $G$. If
    \begin{itemize}
        \item[(i)] $d(x)=\Delta(G)$ or
        \item[(ii)] there exists a vertex $y$ non-adjacent to $x$ with special color $c\neq\psi(x)$,
    \end{itemize}
 then $\psi$ is $L$-equivalent to some $L$-coloring $\phi$ of $G$ such that $\phi(x) = \psi(x)$ and $N^1_x(\phi(x), c) = \emptyset$. 
\end{claim}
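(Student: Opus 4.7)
The plan is to combine Lemma~\ref{lem:equivalent1} with Lemma~\ref{lem:borodin}. If $N^1_x(\psi(x), c) = \emptyset$ one can simply take $\phi := \psi$, so assume otherwise and fix $u \in N^1_x(\psi(x), c)$: by definition $\psi(u) = c$ and $\psi(x) \notin L(u)$, whence $\psi(x) \in L(x) \setminus L(u)$ is a special color of $x$. By Lemma~\ref{lem:equivalent1}, all $L$-colorings of $G$ that assign $\psi(x)$ to $x$ are $L$-equivalent, so it suffices to exhibit some $L$-coloring $\phi$ with $\phi(x) = \psi(x)$ and $N^1_x(\phi(x), c) = \emptyset$.

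To construct $\phi$, I would define a list-assignment $L'$ on $G - x$ by $L'(w) := L(w) \setminus \{c\}$ if $w \in N(x)$ and $\psi(x) \notin L(w)$; $L'(w) := L(w) \setminus \{\psi(x)\}$ if $w \in N(x)$ and $\psi(x) \in L(w)$; and $L'(w) := L(w)$ otherwise. A routine count using $|L(w)| \ge d_G(w)$ and $d_G(w) = d_{G-x}(w) + 1$ for $w \in N(x)$ shows that $L'$ is a degree-assignment on $G - x$. Given any $L'$-coloring $\phi'$ of $G - x$, setting $\phi(x) := \psi(x)$ extends $\phi'$ to a proper $L$-coloring of $G$: the removal of $\psi(x)$ at every neighbor of $x$ ensures properness at $x$, and the removal of $c$ at every $w \in N(x)$ with $\psi(x) \notin L(w)$ yields $N^1_x(\phi(x), c) = \emptyset$.

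The main obstacle is to produce such a $\phi'$. Since $G$ is $4$-connected, $G - x$ is $3$-connected, so by Lemma~\ref{lem:borodin} the only obstruction is $G - x$ being a Gallai tree with $L'$ blockwise uniform; as the only $3$-connected Gallai trees are complete graphs $K_n$ with $n \ge 4$, this means $G - x = K_n$ and $L'(w) = S$ for a common $(n-1)$-set $S$ and every $w \in V(G - x)$. Hypothesis (i) rules this out directly: the vertices of $K_n$ adjacent to $x$ have degree $n$ in $G$, so $\Delta(G) \ge n$, and together with $d(x) \le |V(G - x)| = n$ the equality $d(x) = \Delta(G)$ forces $x$ adjacent to all of $K_n$, i.e., $G = K_{n+1}$, a contradiction. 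Hypothesis (ii) instead rules out blockwise uniformity: the fixed bad neighbor $u$ satisfies $L(u) = S \cup \{c\}$ (in particular $c \notin S$), while $y \notin N(x)$ satisfies $L(y) = S$, and $y$ being special with color $c$ forces $c \in L(y) = S$, a contradiction. In either case $\phi'$ exists, and Lemma~\ref{lem:equivalent1} then completes the proof.
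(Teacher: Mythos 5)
Your proposal is correct and follows essentially the same route as the paper: the same list-assignment $L'$ on $G-x$, the same appeal to Lemma~\ref{lem:borodin} to rule out the complete-graph/identical-lists obstruction under (i) and (ii), and the same use of Lemma~\ref{lem:equivalent1} (noting $\psi(x)$ is special for $x$) to conclude $L$-equivalence. The only cosmetic difference is that under (ii) you derive the non-uniformity of $L'$ by comparing $L'(u)$ with $L'(y)$, whereas the paper compares $L'(y)$ with $L'(z)$ for a special neighbor $z$ of $y$; both work.
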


\begin{proof}
    If $N^1_x(\psi(x),c)=\emptyset$, then the statement is trivially true (with $\phi=\psi$). If $N^1_x(\psi(x),c)\neq\emptyset$, let $L'$ be the list-assignment for $G - x$ defined by 

$$
L'(w)=\left\{
	\begin{array}{ll}
		L(w) \setminus \{\psi(x)\} & \textrm{if $w \in N(x)$ and $\psi(x) \in L(w)$,}\\
		L(w) \setminus \{c\} \quad & \textrm{if $w \in N(x)$ and $\psi(x) \not\in L(w)$},\\
		L(w) & \textrm{otherwise}
	\end{array}\right.
$$
for each $w \in V(G-x)$.

\begin{subclaim}
The graph $G - x$ has an $L'$-coloring. 
\end{subclaim}
\begin{proof}
   Note that $L'$ is a degree-assignment for $G - x$. Moreover, $G-x$ is not an odd cycle since it is 3-connected. Now $G - x$ has an $L'$-coloring unless it is a complete graph and $L'$ is identical everywhere, by Lemma~\ref{lem:borodin}.  If (i) is true, then $G - x$ is not a complete graph; otherwise, $G$ is also a complete graph. If instead (ii) is true, then observe that $c\in L'(y)$ since $y$ is non-adjacent to $x$. Further, $c\notin L'(z)$ for some special neighbor $z$ of $y$. So, $L'$ is not identical everywhere. Thus, there exists an $L'$-coloring of $G-x$.  
\end{proof}  
Let $\phi'$ be an $L'$-coloring of $G- x$, and let $\phi$ be the $L$-coloring of $G$ such that $\phi(x) = \psi(x)$ and $\phi \restriction (G - x) = \phi'$ (where $\phi \restriction (G - x)$ denotes the restriction of $\phi$ to $G - x$). Now $\phi$ is proper and $N^1_x(\phi(x), c)= \emptyset$. Since $N^1_x(\psi(x), c) \not= \emptyset$, the color $\psi(x)$ is special for $x$. By Lemma~\ref{lem:equivalent1}, all $L$-colorings that assign color $\varphi(x)$ to $x$ are $L$-equivalent, i.e., $\psi$ is $L$-equivalent to $\phi$.   
\end{proof}



\begin{claim}\label{claim:2}
    If $N^1_v(\varphi(v), a) = \emptyset$ but $N^2_v(\varphi(v), a) \not= \emptyset$, then at least one of the following holds:
    \begin{itemize}
        \item $\varphi$ is $L$-equivalent to an $L$-coloring $\phi$ of $G$ such that $\phi(v) = \varphi(v)$ and $N^1_v(\phi(v), a) = N^2_v(\phi(v), a) = \emptyset$.
        \item There exist non-adjacent special vertices $x,y\in V(G)$ such that $a$ is special for both $x$ and $y$. 
    \end{itemize}
\end{claim}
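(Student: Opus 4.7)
My plan is to assume that the second bullet fails and derive the first. If there is no pair of non-adjacent special vertices sharing $a$ as a common special color, then since $v$ is already special with special color $a$ (by the setup of the main proof), every vertex $z \neq v$ with $a \in L(z)$ that has a neighbor missing $a$ from its list must lie in $N(v)$. Equivalently, every $z \in V(G) \setminus N[v]$ with $a \in L(z)$ has all its neighbors in $S := \{y \in V(G) : a \in L(y)\}$. This structural closure condition around the problematic distance-$2$ vertices will be the lever driving the construction.

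Fix $u' \in N^2_v(\varphi(v), a)$, so $\varphi(u') = \varphi(v)$ and $a \notin L(u')$, and let $w \in N(v) \cap N(u')$ with $\varphi(w) = a$. I would attempt to construct the desired $\phi$ by performing a Kempe change on a chain avoiding $v$ --- specifically, a swap on $H_{a,b}(w)$ for a suitable $b \in L(w) \setminus \{a, \varphi(v)\}$. Since $v$ is colored $\varphi(v) \notin \{a,b\}$, we have $v \notin H_{a,b}(w)$, so the swap leaves $v$'s color untouched, recolors $w$ from $a$ to $b$, and thereby breaks the path $v$--$w$--$u'$ in $H_{\varphi(v),a}(v)$. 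Iterating this (or handling all $u' \in N^2_v$ simultaneously) should yield $\phi$ with $N^1_v(\phi(v), a) = N^2_v(\phi(v), a) = \emptyset$. More systematically, mimicking Claim \ref{claim:1}, I would define an auxiliary list-assignment $L'$ on $G - v$ by removing $\varphi(v)$ from $L(z)$ for all $z \in N(v)$ and additionally removing $a$ from $L(z)$ for each $z \in N(v)$ adjacent to $N^2_v(\varphi(v), a)$; the closure condition should ensure $L'$ remains a degree-assignment, so Lemma \ref{lem:borodin} produces an $L'$-coloring that extends to the desired $\phi$ via $\phi(v) := \varphi(v)$.

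The main obstacle is twofold. First, verifying that the auxiliary $L'$ remains a valid degree-assignment with all lists sufficiently large after the extra restrictions --- this is where the structural closure condition combined with $4$-connectivity must be carefully exploited, and where one must also rule out the possibility that $G - v$ is a Gallai tree with blockwise-uniform $L'$. Second, establishing the $L$-equivalence between $\varphi$ and $\phi$: unlike in Claim \ref{claim:1}, here $\varphi(v)$ is not guaranteed to be special for $v$, so Lemma \ref{lem:equivalent1} does not apply directly. Demonstrating $L$-equivalence will likely require an explicit sequence of Kempe changes --- for instance, by inducting on the symmetric difference between $\varphi$ and $\phi$ and performing local swaps on chains that do not contain $v$ --- which I anticipate to be the most delicate step of the argument.
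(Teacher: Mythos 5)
There are two genuine gaps here, and the second is fatal to the proposal as written.

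First, your auxiliary list-assignment $L'$ on $G-v$ need not be a degree-assignment. A vertex $z\in N(v)$ that is adjacent to some member of $N^2_v(\varphi(v),a)$ and has both $\varphi(v)$ and $a$ in $L(z)$ loses two colors from its list, while its degree in $G-v$ drops by only one; if $|L(z)|=d(z)$, then $|L'(z)|=d(z)-2<d_{G-v}(z)$. Your ``closure condition'' (that every special vertex with special color $a$ outside $N[v]$ would witness the second bullet) says nothing about whether $\varphi(v)$ lies in the lists of neighbors of $v$, so it cannot rescue this. The paper avoids the problem by deleting \emph{two} vertices, $v$ and a neighbor $x$ of $v$ with $\varphi(x)=a$ that is adjacent to the problematic distance-two vertex $z$: each remaining vertex then loses at most as many colors as the number of deleted neighbors it had, so the modified assignment genuinely is a degree-assignment on $G-\{v,x\}$.

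Second, and more importantly, you explicitly leave open how to show $\varphi$ is $L$-equivalent to the constructed $\phi$, offering only a hope of ``inducting on the symmetric difference.'' This is the entire content of the claim, not a detail. The paper's key observation is that the vertex $x$ (your $w$) is itself special with special color $a$, because $\varphi(x)=a\in L(x)$ while $a\notin L(z)$ for its neighbor $z\in N^2_v(\varphi(v),a)$. Hence Lemma \ref{lem:equivalent1} applies \emph{at $x$}, not at $v$: all $L$-colorings assigning $a$ to $x$ are $L$-equivalent. Since the new coloring is built on $G-\{v,x\}$ with $\phi(x)=\varphi(x)=a$ kept fixed, the equivalence of $\varphi$ and $\phi$ is immediate. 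Without this observation your construction produces a coloring with no route back to $\varphi$. Note also that the paper does not prove the first bullet outright under the negation of the second; it constructs $\phi$ with $N^1_v(\phi(v),a)=\emptyset$ and no neighbor of $x$ in $N^2_v(\phi(v),a)$, and then observes that any surviving witness of $N^2_v(\phi(v),a)\neq\emptyset$ yields a second special vertex $y$ with special color $a$, necessarily non-adjacent to $x$ since both receive color $a$ under $\phi$ --- which is exactly the second bullet.
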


\begin{proof}
Since  $N^1_v(\varphi(v), a) = \emptyset$ but $N^2_v(\varphi(v), a) \not= \emptyset$, there exist $x,z\in H_{\varphi(v), a}(v)$ such that, by definition,
\begin{itemize}
    \item $x\in N(v)$ and $z\in N(x)\cap N^2(v)$, and
    \item $\varphi(x) = a$ and $a \not\in L(z)$.
\end{itemize}
See also Figure~\ref{fig2}. In particular, $a$ is special for $x$. Thus, by Lemma \ref{lem:equivalent1}, all $L$-colorings that assign color $a$ to $x$ are $L$-equivalent. Let $L'$ be the list-assignment for $G - \{v,x\}$ defined by
$$
L'(w)=\left\{
	\begin{array}{ll}
		L(w) \setminus \{\varphi(v)\} & \textrm{if $w \in N(v) \setminus N(x)$ and $\varphi(v) \in L(w)$,}\\
		L(w) \setminus \{a\} \quad & \textrm{if $w \in N(v) \setminus N(x)$ and $\varphi(v) \not\in L(w)$},\\
  L(w) \setminus \{a\} \quad & \textrm{if $w \in N(x) \setminus N(v)$ and $a \in L(w)$},\\
  L(w) \setminus \{\varphi(v)\} \quad & \textrm{if $w \in N(x) \setminus N(v)$ and $a \not\in L(w)$},\\
  L(w) \setminus \{a, \varphi(v)\} \quad & \textrm{if $w \in N(v) \cap N(x)$},\\
		L(w) & \textrm{otherwise}
	\end{array}\right.
$$
for every $w \in V(G -\{v,x\})$.

\begin{subclaim}
$G - \{v,x\}$ has an $L'$-coloring $\phi'$.
\end{subclaim}

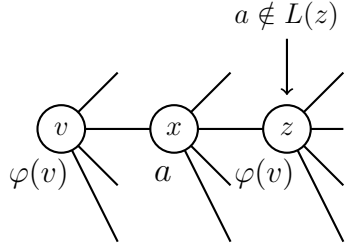
\begin{figure}[t]
\centering
\begin{tikzpicture}[rotate=90,scale=1.5, every node/.style={scale=0.9}]
\draw[thick] (0,0) -- (0,-1) (-0.5,-0.5) -- (0,0) -- (-1,-0.5) (0,0) -- (0.5,-0.5) (0,-1) -- (0,-2) (-0.5,-1.5) -- (0,-1) -- (-1,-1.5) (0,-1) -- (0.5,-1.5) (-0.5,-2.5) -- (0,-2) -- (-1,-2.5) (0,-2.5) -- (0,-2) -- (0.5,-2.5);
\draw (0,-3) node[uStyle, scale=0.7, draw=white] {};
\draw[thick] (0,0) node[uStyle] {$v$} (0,-1) node[uStyle] {$x$} (0,-2) node[uStyle] {$z$} (-0.4,0.2) node[scale=1.1] {$\varphi(v)$} (-0.4,-0.9) node[scale=1.2] {$a$} (-0.4,-1.8) node[scale=1.1] {$\varphi(v)$} (1,-2) node {$a\notin L(z)$}; 
\draw[thick,<-] (0.3,-2) -- (0.8,-2);
\end{tikzpicture}
\caption{$N^2(\varphi(v),a)\neq\emptyset$, so there exists $z$ with $a\notin L(z)$.}
\label{fig2}
\end{figure} 

\begin{proof}
 Note that $L'$ is a degree-assignment for $G - \{v,x\}$. Moreover, recall that $G$ is 4-connected, so $d_G(w)\ge4$ for every $w\in V(G)$. This implies $G - \{v,x\}$ is not a cycle; otherwise, $z\in N(v)\cap N(x)$, contradicting that $\varphi(v)=\varphi(z)$. Therefore, $G - \{v,x\}$  has an $L'$-coloring unless it is complete, by Lemma \ref{lem:borodin}. But if $G - \{v,x\}$ is complete, then $v$ is adjacent to every vertex of $G - v$, since $d(v)=\Delta(G)$. This contradicts the fact that $v$ and $z$ are colored alike. Thus, $G-\{v,x\}$ is not complete, as required.
\end{proof}
Let $\phi$ be the $L$-coloring of $G$ defined by $\phi(v)=\varphi(v)$, $\phi(x)=\varphi(x)=a$, and $\phi \restriction (G-\{v,x\})=\phi'$. Observe that $\varphi$ is $L$-equivalent to $\phi$ since $\phi(x)=\varphi(x)=a$. Further, by the definition of $L'$, it follows that $N^1_v(\phi(v), a) = \emptyset$ and no neighbor of $x$ belongs to $N^2_v(\phi(v), a)$. So, if $N^2_v(\phi(v), a) \not= \emptyset$, then there exist $y,z'\in H_{\phi(v),a}(v)$ with $y\neq x$ and $z'\neq z$ such that 
\begin{itemize}
    \item $y \in N(v)$ and $\phi(y) = a$,
    \item $z'\in N(y)\cap N^2(v)$ and $\phi(z') = \phi(v)$, and 
    \item $a\notin L(z')$.
\end{itemize} 
Thus, $x$ and $y$ are non-adjacent special vertices with $a$ as a common special color. This completes the proof of the claim.
\end{proof}

\begin{claim}\label{claim:3}
If there exist non-adjacent special vertices $x,y\in V(G)$ with some common special color $c$, then all $L$-colorings of $G$ are $L$-equivalent. 
\end{claim}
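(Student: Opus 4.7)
The strategy will be to exploit Lemma \ref{lem:equiv2}, which (applied with $a=b=c$) already establishes that the set $\mathcal{L}_c$ of all $L$-colorings assigning $c$ to $x$ or to $y$ is a single $L$-equivalence class. It therefore suffices to prove that every $L$-coloring $\psi$ of $G$ is $L$-equivalent to at least one coloring in $\mathcal{L}_c$. If $\psi(x)=c$ or $\psi(y)=c$ we are done at once, so I will assume $\psi(x),\psi(y)\neq c$ throughout.

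My first move is to apply Claim \ref{claim:1}(ii) to $\psi$ with vertex $x$ and non-adjacent special witness $y$ (whose special color $c$ is not equal to $\psi(x)$). This produces $\phi\sim\psi$ with $\phi(x)=\psi(x)$ and $N^1_x(\phi(x),c)=\emptyset$. If $\phi(y)=c$ we are done, so assume $\phi(y)\neq c$. Next I would consider the Kempe chain $H=H_{\phi(x),c}(x)$ and attempt the Kempe change on it; if that change is $L$-valid, the resulting coloring assigns $c$ to $x$ and hence lies in $\mathcal{L}_c$.

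Otherwise, any obstruction vertex $w\in H\setminus\{x\}$ must lie at distance at least $2$ from $x$ in $G$ (since $N^1_x(\phi(x),c)=\emptyset$), and hence is non-adjacent to $x$. Handling $w\in N^2(x)\cap H$ would follow the blueprint of Claim \ref{claim:2}, with $x$ playing the role of $v$: identify the intermediate vertex $x'\in N(x)\cap N(w)\cap H$ (colored $c$ by $\phi$), build a degree-assignment $L'$ on $G-\{x,x'\}$ exactly as in that claim, and invoke Lemma \ref{lem:borodin} to obtain an $L'$-coloring that extends to a coloring in $\mathcal{L}_c$ which is $L$-equivalent to $\phi$. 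When $w$ is at distance at least $3$ from $x$, the neighbor of $w$ in $H$ closer to $x$ is a new special vertex, non-adjacent to $x$, with special color $c$ or $\phi(x)$; applying Lemma \ref{lem:equiv2} to this fresh non-adjacent special pair then absorbs $\phi$ into the desired equivalence class.

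The hard part will be running the Claim \ref{claim:2}-style argument in this new setting, because $x$ is not of maximum degree. In Claim \ref{claim:2} the hypothesis $d(v)=\Delta(G)$ is used precisely to rule out $G-\{v,x\}$ being complete, and here that role must be played by the second special vertex $y$ together with its special neighbor $z_y$ (for which $c\notin L(z_y)$). Concretely, I would argue that $L'$ is not identical everywhere on $G-\{x,x'\}$ because $c\in L'(y)$ while $c\notin L'(z_y)$, after checking that neither $y$ nor $z_y$ has been deleted, and that $4$-connectivity ensures $G-\{x,x'\}$ is neither a complete graph nor an odd cycle so that Lemma \ref{lem:borodin} furnishes the required $L'$-coloring. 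The residual corner cases (when $y$ or $z_y$ happens to coincide with $x'$, or when the close neighbor of $w$ in the distance-$3$ sub-case happens to be adjacent to $x$) are where most of the bookkeeping will live.
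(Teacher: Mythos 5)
Your setup, your use of Claim~\ref{claim:1}(ii) to clear $N^1_x(\phi(x),c)$, and your treatment of obstructions at distance at least $3$ all match the paper's proof. The gap is in the distance-$2$ case. Running the Claim~\ref{claim:2} blueprint with $x$ in the role of $v$ and the intermediate vertex $x'$ (colored $c$) in the role of Claim~\ref{claim:2}'s $x$ produces an $L'$-coloring of $G-\{x,x'\}$ that extends to a coloring $\phi''$ with $\phi''(x)=\phi(x)\neq c$ and $\phi''(x')=c$. This $\phi''$ is indeed $L$-equivalent to $\phi$ (both give the special color $c$ to $x'$), but it is \emph{not} in $\mathcal{L}_c$: nothing forces $\phi''(y)=c$, and in fact if $y\in N(x')$ the construction removes $c$ from $L'(y)$, so $\phi''(y)\neq c$ is guaranteed. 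Lemma~\ref{lem:borodin} only gives existence of \emph{some} $L'$-coloring; it gives no control over the color of $y$. What the blueprint actually yields is Claim~\ref{claim:2}'s dichotomy: either $N^1_x=N^2_x=\emptyset$ for $\phi''$ (which does reduce you to your other two cases), or a \emph{new} pair of non-adjacent special vertices with common special color $c$ --- and that second branch is precisely the hypothesis of Claim~\ref{claim:3} again, so it cannot be fed back in without circularity, and you do not address it.

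The paper's Case~2 sidesteps this with a different deletion: it removes $x$ together with the obstruction $t$ itself (the vertex with $c\notin L(t)$), not the intermediate vertex. The intermediate vertex $z$ is adjacent to both deleted vertices but loses only one color from its list, so $|L_H(z)|>d_H(z)$ and, by Lemma~\ref{lem:cranston}, \emph{all} $L_H$-colorings of $H=G-\{x,t\}$ are $L_H$-equivalent. One then greedily builds an $L_H$-coloring assigning $c$ to $y$ (possible because $H-y$ is connected, by $4$-connectivity), connects it to $\phi\restriction H$ inside $H$, and lifts the whole sequence back to $G$ with $x,t$ kept at color $\phi(x)$; the result lies in $\mathcal{L}_c$ because $y$ receives $c$. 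You would need something of this sort --- or a genuine resolution of the second branch of the dichotomy --- to close the distance-$2$ case.
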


\begin{proof}  
Let $L_1$ be the set of $L$-colorings of $G$ that assign $c$ to $x$ or to $y$. By Lemma \ref{lem:equiv2}, all $L$-colorings of $L_1$ are $L$-equivalent. So, it suffices to show that every $\phi\notin L_1$ is $L$-equivalent to some $L$-coloring in $L_1$. Note that this is trivially true if performing a Kempe chain on $H_{\phi(x),c}(x)$ is $L$-valid. Otherwise, at least one of $N^1_x(\phi(x),c)$, $N^2_x(\phi(x),c)$, or $N^{3+}_x(\phi(x),c)$ is nonempty. By Claim~\ref{claim:1}(ii), we can assume that $N^1_x(\phi(x), c) = \emptyset$ since $y$ is non-adjacent to $x$ and $c$ is special for $y$. We split the rest of the proof into two cases. 

\begin{itemize}
    \item[Case 1:]  $N^2_x(\phi(x), c) = \emptyset.$

    In this case, $N_x^{3^+}(\phi(x),c)\neq\emptyset$. Fix $t\in N_x^{3^+}(\phi(x),c)$ and fix $\beta\in\{\phi(x),c\}$ with $\beta\notin L(t)$. Note that the neighbor $z$ of $t$ on a shortest path from $x$ to $t$ in $H_{\phi(x),c}(x)$ is non-adjacent to $x$. Moreover, $\phi(z)=\beta$ and $\beta$ is special for $z$; see Figure~\ref{fig1}. Let us first show that there exists an $L$-coloring $\psi\in L_1$ which assigns $c$ to $x$. Indeed, if we let $L'$ be the list-assignment for $G-x$ defined by $L'(w)=L(w)\setminus\{c\}$ for every $w\in N(x)$ and $L'(w)=L(w)$ for every other vertex, then $L'$ is a degree-assignment for $G-x$. Further, since $G-x$ is connected and $c$ is special for $x$, we have $|L'(r)|> d_{G-x}(r)$ for some special neighbor $r$ of $x$. As in the proof of Lemma~\ref{lem:equiv2}, we color the vertices of $G-x$ in order of decreasing distance from $r$ to obtain an $L'$-coloring $\psi'$ of $G-x$. Finally, we extend $\psi'$ to $\psi$ by coloring $x$ with $c$. Now since $x$ and $z$ are non-adjacent with special colors $c$ and $\beta$, respectively, $\phi$ is $L$-equivalent to $\psi$, by Lemma~\ref{lem:equiv2}. Thus, all $L$-colorings of $G$ are $L$-equivalent.

\item[Case 2:] $N^2_x(\phi(x), c) \not= \emptyset$

In this case, there exists a neighbor $z$ of $x$ with $\phi(z)=c$ and a neighbor $t$ of $z$ with $\phi(t) = \phi(x)$ and $c \not\in L(t)$; see Figure~\ref{fig1}. So, $t \not= y$. Let $H:= G - \{x, t\}$ and let $L_H$ be the list-assignment for $H$
 defined by 
$$
L_H(w)=\left\{
	\begin{array}{ll}
		L(w) \setminus \{\phi(x)\} & \textrm{if $w \in N(x) \cup N(t)$,}\\
		L(w) & \textrm{otherwise}
	\end{array}\right.
$$
for each $w \in V(H)$. To see that  $H$ admits an $L_H$-coloring which colors $y$ with $c$, observe that $L_H$ is a degree-assignment for $H$ and $|L_H(z)| > d_H(z)$. Moreover, since $G$ is $4$-connected, $H-y$ is connected. So, there is a spanning tree $T$ of $H - y$ that is rooted at $z$. As in the proof of Lemma~\ref{lem:equiv2}, we order the vertices of $H$ by placing $y$ first and then placing the other vertices of $T$ in order of decreasing distance from $z$. Now $H$ has an $L_H$-coloring $\psi_H$ that assigns $c$ to $y$, as desired. 

By Lemma \ref{lem:cranston}, all $L_H$-colorings of $H$ are $L_H$-equivalent. So, $\psi_H$ is $L_H$-equivalent to $\phi\restriction H$. Let $\psi$ be the $L$-coloring of $G$ defined by $\psi(x)=\psi(t)=\phi(x)$ and $\psi\restriction H=\psi_H$. Now $\phi$ is $L$-equivalent to $\psi$. Thus, all $L$-colorings of $G$ are $L$-equivalent. This completes Case 2. 
\end{itemize}
The claim is proved. 
\end{proof}

\begin{figure}[t]
\begin{subfigure}{0.55\textwidth}
\centering
\begin{tikzpicture}[scale=1.5, every node/.style={scale=0.9}]
\draw[thick] (0,0) -- (0,-1) (-0.5,-0.5) -- (0,0) -- (-1,-0.5) (0,0) -- (0.5,-0.5) (0,-1) -- (0,-2) (0,-3) -- (0,-2) (-0.5,-2.5) -- (0,-2) -- (-1,-2.5) (0,-2) -- (0.5,-2.5);
\draw[thick] (0,0) node[uStyle] {$x$} (0,-0.5) node[uStyle, scale=0.5] {} (0,-1) node[uStyle, scale=0.5] {} (0,-1.5) node[uStyle, scale=0.5] {} (0,-2) node[uStyle] {$z$} (0,-3) node[uStyle] {$t$} (-0.6,0.1) node[scale=1.1] {$\phi(x)$} (-0.4,-1.9) node[scale=1.1] {$\beta$} (1.4,-3) node {$\beta\notin L(t)$};
\draw[thick,<-] (0.3,-3) -- (0.8,-3);
\end{tikzpicture}
\end{subfigure}%
\begin{subfigure}{0.45\textwidth}
\centering
\begin{tikzpicture}[scale=1.5, every node/.style={scale=0.9}]
\draw[thick, white] (0,-3) node[uStyle] {}; 
\draw[thick] (0,0) -- (0,-1) (-0.5,-0.5) -- (0,0) -- (-1,-0.5) (0,0) -- (0.5,-0.5) (0,-1) -- (0,-2) (-0.5,-1.5) -- (0,-1) -- (-1,-1.5) (0,-1) -- (0.5,-1.5) (-0.5,-2.5) -- (0,-2) -- (-1,-2.5) (0,-2.5) -- (0,-2) -- (0.5,-2.5);
\draw[thick] (0,0) node[uStyle] {$x$} (0,-1) node[uStyle] {$z$} (0,-2) node[uStyle] {$t$} (-0.6,0.1) node[scale=1.2] {$\phi(x)$} (-0.4,-0.9) node[scale=1.2] {$c$} (-0.6,-1.9) node[scale=1.2] {$\phi(x)$} (1.35,-2) node {$c\notin L(t)$}; 
\draw[thick,<-] (0.3,-2) -- (0.8,-2);
\end{tikzpicture}
\end{subfigure}
\caption{(Left) Case 1: $t\in N^{3^+}(\phi(x),c)$ and $\beta\notin L(t)$. (Right) Case 2: $t\in N^2(\phi(x),c)$, so $c$ is special for $z$.}
\label{fig1}
\end{figure}
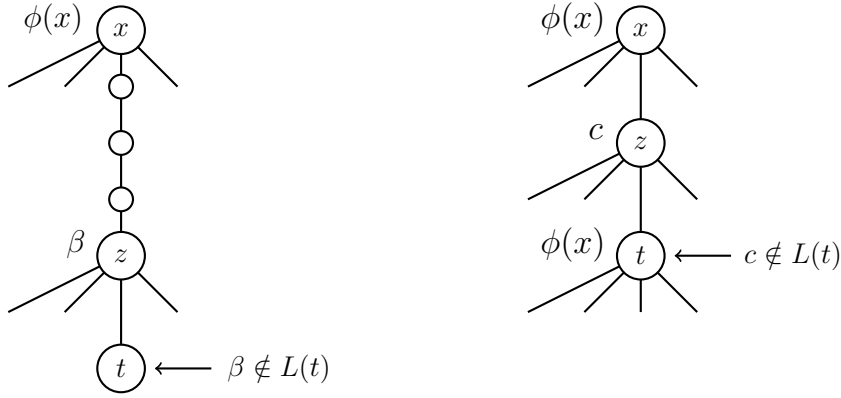

To finish the proof: By Claim~\ref{claim:1}(i), we have $N^1_v(\varphi(v),a)=\emptyset$. By Claim~\ref{claim:2}, either $N^2_v(\varphi(v),a)=N^1_v(\varphi(v),a)=\emptyset$ or there exist non-adjacent vertices $x$ and $y$ with a common special color. If the latter is true, we are done by Claim~\ref{claim:3}. If the former is true, recall that at least one of $N^1_v(\varphi(v),a)$, $N^2_v(\varphi(v),a)$, or $N^{3^+}_v(\varphi(v),a)$ is nonempty. Thus, $N^{3^+}_v(\varphi(v),a)\neq\emptyset$. This implies $H_{\varphi(v), a}(v)$ contains a vertex $z$ non-adjacent to $v$ such that $\varphi(z)$ is special for $z$. Recall that $a$ is special for $v$. As in the proof of Lemma~\ref{lem:equiv2} (or similarly, Case 1 in Claim~\ref{claim:3}), it is easy to show that there exists an $L$-coloring $\phi$ of $G$ which assigns the color $a$ to $v$. Now $\varphi$ is $L$-equivalent to $\phi$, by Lemma~\ref{lem:equiv2}. Thus, all $L$-colorings of $G$ are $L$-equivalent. 
\end{proof}

\section*{Acknowledgements}
We thank both referees for many helpful suggestions, which led to an improvement in the presentation, and for spotting some inaccuracies. We also thank Cl\'ement  Legrand-Duchesne and St\'ephan Thomass\'e for helpful discussions. 
The second author was supported by Agence
Nationale de la Recherche (France) under research grant ANR DIGRAPHS ANR-19-CE48-
0013-01.
1

\bibliographystyle{abbrv}

\end{document}